\newtheorem{theorem}{Theorem}
\newtheorem{lemma}{Lemma}
\newtheorem{definition}{Definition}
\theoremstyle{remark}
\newcommand{\re}{\text{\rm Re }}
\newcommand{\im}{\text{\rm Im }}
\begin{document}

\title[Cusp]{Conformal mapping asymptotics at a cusp}
\author[D.~Prokhorov]{Dmitri Prokhorov$^1$}

\thanks{Research supported by the RF Ministry of Education and Science (project 1.1520.2014k)}

\thanks{Dmitri Prokhorov\;\;prokhorovdv@info.sgu.ru}

\thanks{$^1$ Department of Mechanics and Mathematics, Saratov State University, Astrakhanskaya Str., 83, 410012 Saratov Russia}

\maketitle

{\bf Abstract}
We describe the asymptotic behavior of the mapping function at an analytic cusp compared with
Kaiser's results for cusps with small perturbation of angles and the known explicit formulae for
cusps with circular boundary curves. We propose a boundary curve parametrization by generalized
power series which allows us to give explicit representations for locally univalent mapping
functions with given asymptotic properties and for cusp boundary curves having an arbitrary order
of tangency. \vskip3mm

{\bf Keywords} Conformal mapping, cusp, asymptotic behavior, Christoffel-Schwarz integral
\vskip3mm
{\bf Mathematical Subject Classification} Primary 30E15; Secondary 30C20, 30E25

\section{Introduction}

A conformal mapping from the unit disk $\mathbb D=\{z:|z|<1\}$ or the upper half-plane $\mathbb H=\{z:\im z>0\}$ onto a simply connected domain $D\subset\mathbb C$ with an analytic arc $\Gamma$ on the boundary $\partial D$ of $D$ can be extended analytically through the preimage of $\Gamma$. The situation is more complicated when $\Gamma$ consists of two arcs $\Gamma_1$ and $\Gamma_2$ meeting at a boundary point even if both $\Gamma_1$ and $\Gamma_2$ are analytic curves. Without restrictions assume that $\Gamma_1$ and $\Gamma_2$ intersect at the origin.We refer to the book \cite{Pom} by Pommerenke, Sections 3.3-3.4, for an overview. In particular, Pommerenke gives in \cite{Pom}, p.57, the asymptotic behavior of $f:\mathbb D\to f(\mathbb D)$ in the case when $\partial f(\mathbb D)$ has a Dini-smooth corner of opening $\pi\alpha$, $0<\alpha<2$.

Warschawski \cite{War} proved a geometric criterion under which a mapping function $f(z)$ behaves like $z^{\alpha}$ for a domain with opening angle $\pi\alpha$, $0<\alpha\leq2$, see also the results by Lehman \cite{Leh} about a mapping function developed in a certain generalized power series.

There are not so many works on determining the behavior of the mapping function at a cusp of a domain with a piecewise analytic boundary. We say that a domain $D\subset\mathbb C$ with $0\in\partial D$ has an {\it analytic cusp} at 0 if $\partial D$ at 0 consists of two regular analytic curves such that the interior angle of $D$ at 0 vanishes. For instance, Warschawski considered in \cite{War1} conformal mappings of infinite strips which correspond to domains with two zero interior angles, i.e., cusps, at infinity.

The explicit mapping function $f:\mathbb H\to\mathbb H\setminus\Gamma$ with the circular arc $\Gamma$ of radius 1 and centered at $i$ is shown in \cite{Pro}. The curves $\Gamma$ and $R^+:=\{x\in\mathbb R:x\geq0\}$ form the analytic cusp at 0. Note that $\Gamma$ has the first order tangency to $\mathbb R=\partial\mathbb H$ at 0 and the curvature of $\Gamma$ at 0 equals 1. We say that two curves in $\mathbb R^2$ defined by $y=g(x)$ and $y=h(x)$ have the $n$-th order tangency at the point $P=(x_0,y_0)$ if $g(x_0)=h(x_0)=y_0$, $g'(x_0)=h'(x_0),\dots,g^{(n)}(x_0)=h^{(n)}(x_0)$ and $g^{(n+1)}(x_0)\neq h^{(n+1)}(x_0)$. It is possible to generalize this concept from $n\in\mathbb N$ to an arbitrary $p$-order tangency, $p>0$.

Kaiser investigated in \cite{Kai} analytic cusps possessing a perturbation property. For a domain $D$, denote the boundary curves of an analytic cusp at $0\in\partial D$ by $\Gamma_1$ and $\Gamma_2$, $\Gamma_k=\Gamma_k[0,\epsilon]=te^{i\sphericalangle_k(t)}$ where $\sphericalangle_k(t)$ are real power series convergent in a neighborhood of $t=0$, $\sphericalangle_k(0)=0$, $k=1,2$, $\epsilon>0$, $\sphericalangle_D(t):=\sphericalangle_2(t)-\sphericalangle_1(t)$. For $h(t)=\sum_{n=1}^{\infty}a_nt^n$, $\text{ord}(h):=\min\{n\in\mathbb N:a_n\neq0\}$. Let $d:=\text{ord}(\sphericalangle_D)$ and $a>0$ be such that $\lim_{t\to0}\sphericalangle_D(t)/at^d=1$. We say that $D$ has small perturbations of angles if $\min\{\text{ord}(\sphericalangle_1),\text{ord}(\sphericalangle_2)\}=d$ and $\text{ord}(\sphericalangle_D(t)-at^d)>2d$. Kaiser proved in \cite{Kai} that, for $D\subset\mathbb C$ with an analytic cusp $0\in\partial D$ and having small perturbations of angles, the conformal map $f:\mathbb H\to D$, $f(0)=0$, satisfies the asymptotic relation
\begin{equation}
\lim_{z\to0}f(z)\left(-\frac{\pi}{da\log|z|}\right)^{-\frac{1}{d}}=1,\;\;\im z>0. \label{kai}
\end{equation}

In this article, instead of Kaiser's parametrization $\Gamma_k=te^{i\sphericalangle_k(t)}$, $k=1,2$, we propose
\begin{equation}
\Gamma_k(z)=\frac{-2\pi}{a\log z}\left(1+\sum_{n-1}^{\infty}\frac{\Phi_n(z)}{\log^nz}\right),\;\;k=1,2, \label{pro}
\end{equation}
where $z\in(-\epsilon,0)$ for $\Gamma_1$ and $z\in(0,\epsilon)$ for $\Gamma_2\subset\mathbb R$, and the continuous branch of $\log z$ for $z$ from the closure of $\mathbb H$ is defined by $\log i=i\frac{\pi}{2}.$ Parametrization (\ref{pro}) for $\Gamma_k$ is motivated by the Christoffel-Schwarz representation in \cite{Pro} for a circular arc $\Gamma$. The functions $\Phi_n$ are developed in certain power series with real coefficients.

The article is organized as follows. Section 2 contains preliminary results preparing the proof of the main Theorem 1. In Section 3 we prove Theorem 1 showing that (\ref{pro}) implies the conformal mapping asymptotics in a cusp formed by the interval $\Gamma_2$ and the curve $\Gamma_1=\Gamma$ having the first order tangency to $\mathbb R$ at 0. It is interesting to compare the new results with Kaiser's formula (\ref{kai}) where $d=1$. Theorem 2 generalizes the asymptotic relation (\ref{kai}) for all $d>0$ applied to the curve $\Gamma^{\frac{1}{d}}$ which has the $d-th$ order tangency to $\mathbb R$ at 0.

\section{Preliminaries}

Consider a circular arc $\gamma=\gamma(-\frac{\pi}{2},\varphi_0]:=\{i+e^{i\varphi}:-\frac{\pi}{2}<\varphi\leq\varphi_0\}$, $-\frac{\pi}{2}<\varphi_0<0$, in the upper half-plane $\mathbb H$, $\gamma(0)=0$, and a mapping $w=f(z)$ from $\mathbb H$ onto $\mathbb H\setminus\gamma$. The mapping $f$ with the hydrodynamic normalization at infinity has been constructed in \cite{Pro} applying the Christoffel-Schwarz formula. Set $f(0)=0$ and rewrite $f(z)$ from \cite{Pro} in the form
\begin{equation}
\frac{1}{f(z)}=\frac{1}{2\pi}\left[\log\frac{z+\alpha}{z}+\frac{2\pi-\alpha}{z+\alpha}\right], \label{Chr}
\end{equation}
where $\alpha>0$ depends on $\varphi_0$.

On the boundary of $\mathbb H\setminus\gamma$ we see the corner of opening $\pi$ at $w=0$ and the cusp with the mutual boundary arc $\gamma$. Representation (\ref{Chr}) allows us to expand $f(z)$ near $z=0$, $z\in\mathbb H$,
\begin{equation}
f(z)=\frac{2\pi}{-\log z+\log(z+\alpha)+(2\pi-\alpha)/(z+\alpha)}= \label{exp}
\end{equation}
$$\frac{-2\pi}{\log z}\left[1+\sum_{n=1}^{\infty}\left( \frac{\log(z+\alpha)+(2\pi-\alpha)/(z+\alpha)}{\log z}\right)^n\right]=\frac{-2\pi}{\log z}\left(1+\sum_{n=1}^{\infty}\sum_{k=0}^{\infty}\frac{c_{nk}z^k}{\log^nz}\right)$$ with certain real coefficients $c_{nk}$, $n\geq1$, $k\geq0$, for which $\sum_{k=0}^{\infty}c_{nk}x^k$ converges for all $n\geq1$ and real $x$ small enough and $$\sum_{n=1}^{\infty}\sum_{k=0}^{\infty}\frac{c_{nk}x^k}{\log^nx}$$ converges in a neighborhood of $x=0$, $x\in\mathbb R$.

Extend $f(z)$ continuously onto $\mathbb H\cup\mathbb R$ and parameterize $\gamma$ in terms of parameter $x<0$ near $x=0$,
$$
f(x)=\frac{-2\pi(\log|x|-i\pi)}{\log^2|x|+\pi^2}\left[1+\sum_{n=1}^{\infty}\left(\frac{\log(x+\alpha)+(2\pi-\alpha)/(x+\alpha)} {\log|x|+i\pi}\right)^n\right],
$$
$$f(x)=u(x)+iv(x),$$ $$u(x)=\frac{-2\pi}{\log|x|}+O\left(\frac{1}{\log^2|x|}\right),\;\;x\to0,\;\;x<0,$$ $$v(x)=\frac{2\pi^2}{\log^2|x|}+O\left(\frac{1}{\log^3|x|}\right),\;\;x\to0,\;\;x<0.$$ This implies the explicit asymptotic representation for the circular arc $\gamma$ of curvature 1 at $u=0$, $$v=\frac{u^2}{2}+o(u^2),\;\;u\to0,\;\;u>0.$$

Generalize representation (\ref{exp}) with real coefficients $\{c_{nk}\}$ determined by the circular arc $\gamma$ of curvature 1 to the same formula (\ref{exp}) with rather free coefficients.

Let a curve $\Gamma=\Gamma[-x_0,0)$ be parameterized by the parameter $x\in[-x_0,0)$, $x_0>0$, as follows $$\Gamma[-x_0,0)=\{u(x)+iv(x): x\in[-x_0,0)\},$$
$$
u(x)=\re\left[\frac{-2\pi(\log|x|-i\pi)}{a(\log^2|x|+\pi^2)}\left(1+\sum_{n=1}^{\infty}\sum_{k=0}^{\infty}\frac{c_{nk}x^k}{\log^nx}\right) \right],
$$
$$
v(x)=\im\left[\frac{-2\pi(\log|x|-i\pi)}{a(\log^2|x|+\pi^2)}\left(1+\sum_{n=1}^{\infty}\sum_{k=0}^{\infty}\frac{c_{nk}x^k}{\log^nx}\right) \right],
$$
where $a>0$ is the curvature of $\Gamma$ at $x=0$.

\begin{definition}
We say that the representation
\begin{equation}
f(z)=\frac{-2\pi}{a\log z}\left(1+\sum_{n=1}^{\infty}\sum_{k=0}^{\infty}\frac{c_{nk}z^k}{\log^nz}\right) \label{rep}
\end{equation}
with $a>0$ and real coefficients $c_{nk}$, $n\geq1$, $k\geq0$, is admissible if the following conditions are satisfied: \newline \text{(i)} the set $\{\root k \of{|c_{nk}|}:n\geq1,k\geq0\}$ is bounded; \newline \text{(ii)} the sets $\{\root n\of{|\sum_{k=0}^{\infty}c_{nk}z^k}|:\im z\geq0\}$, $n\geq1$, are uniformly bounded for $|z|$ small enough; \newline \text{(iii)} the sets $\{\root n\of{|\sum_{k=1}^{\infty}kc_{nk}z^{k-1}}|:\im z\geq0\}$, $n\geq1$, are uniformly bounded for $|z|$ small enough; \newline \text{(iv) } $c_{10}\neq0$.
\end{definition}

For $z$ from a neighborhood of the origin and for an admissible function (\ref{rep}), denote $$\Phi_n(z)=\sum_{k=0}^{\infty}c_{nk}z^k,\;\;n\geq1,\;\;\Phi_1(0)\ne0.$$ The functions $\Phi_n$ and $$f(z)=\frac{-2\pi}{a\log z}\left(1+\sum_{n=1}^{\infty}\frac{\Phi_n(z)}{\log^nz}\right)$$ are analytic in $H_R\setminus\{0\}$ where $H_R:=\{z:|z|<R,\im z\geq0\}$, $R>0$ is small enough. We will show that $f(x)$, $x\in\mathbb R$, parameterizes an interval $(0,R']\subset\mathbb R$ for $x>0$ in a neighborhood of $x=0$, and a Jordan curve $\Gamma\subset\mathbb H$ for $x<0$ in a neighborhood of $x=0$.

\begin{lemma} An admissible representation $f(x)$ is a one-to-one parametrization of an interval $(0,R')\subset\mathbb R$ for $x>0$ small enough.
\end{lemma}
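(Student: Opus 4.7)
The plan is to deduce one-to-oneness on $(0,R')$ by establishing strict monotonicity, i.e.\ by showing $f'(x)>0$ on a small positive interval. First I would observe that for $x>0$ small the quantity $\log x$ is real and negative, and since all coefficients $c_{nk}$ are real, each $\Phi_n(x)$ is real whenever the power series converges; hence the whole bracketed factor and thus $f(x)$ is real. Moreover the leading term $-2\pi/(a\log x)$ is positive, and the remainder series contributes $O(1/\log x)$, so $f(x)>0$ for $x>0$ small, and $f(x)\to 0$ as $x\to 0^{+}$.

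Next I would justify convergence and term-by-term differentiation of the double series. Admissibility condition (i) guarantees that each $\Phi_n$ is holomorphic in a common disk $|z|<\rho$. Condition (ii) gives a uniform constant $M$ with $|\Phi_n(z)|\le M^n$ on a slightly smaller disk; hence
$$g(z):=1+\sum_{n=1}^{\infty}\frac{\Phi_n(z)}{\log^n z}$$
converges absolutely and uniformly on sets where $|\log z|\ge 2M$, i.e.\ on a sufficiently small half-neighborhood of $0$, and satisfies $g(z)=1+\Phi_1(0)/\log z+o(1/\log z)$ as $z\to 0$. Differentiating (\ref{rep}) gives
$$f'(z)=\frac{2\pi}{a\,z\log^{2}z}\bigl[g(z)-z\log z\cdot g'(z)\bigr],$$
with
$$g'(z)=\sum_{n=1}^{\infty}\frac{\Phi_n'(z)}{\log^n z}-\sum_{n=1}^{\infty}\frac{n\,\Phi_n(z)}{z\log^{n+1}z}.$$
Condition (iii) provides a bound $|\Phi_n'(z)|\le M_1^{n}$ which, together with (ii), makes both series in $g'(z)$ converge uniformly on the same set and lets me estimate
$$z\log z\cdot g'(z)=\sum_{n=1}^{\infty}\frac{z\,\Phi_n'(z)}{\log^{n-1}z}-\sum_{n=1}^{\infty}\frac{n\,\Phi_n(z)}{\log^{n}z}=O\!\left(\frac{1}{\log z}\right)+O(z),$$
so $z\log z\cdot g'(z)\to 0$ and the bracket tends to $1$ as $z\to 0$.

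Combining these estimates, for $x>0$ small enough the factor in brackets is bounded away from $0$ (close to $1$), while $2\pi/(a\,x\log^{2}x)>0$ since $x>0$ and $\log^{2}x>0$. Consequently $f'(x)>0$ on some interval $(0,R')$, so $f$ is strictly increasing there. Combined with $f(0^{+})=0$ and continuity, this shows $f$ maps $(0,R')$ bijectively onto an interval of the form $(0,R')\subset\mathbb R$, which is exactly the claim.

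The main obstacle is controlling the derivative series $g'(z)$: the presence of the $1/z$ in the second sum could in principle cancel the $1/(z\log^{2}z)$ prefactor of $f'$. The condition (iii) is precisely what guarantees that this contribution remains of order $1/\log z$ after multiplication by $z\log z$, and hence is subdominant to the leading factor $g(z)\to 1$. The role of condition (iv), $c_{10}\neq 0$, is not needed here but identifies the first subleading term in $g$.
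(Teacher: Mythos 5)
Your proposal is correct and takes essentially the same approach as the paper: both prove strict monotonicity by computing $f'$ (your bracket $\frac{1}{x}\bigl[g(x)-x\log x\,g'(x)\bigr]$ expands term-for-term into the paper's $\frac{1}{x}+\sum_{n\ge1}\frac{(n+1)\Phi_n(x)}{x\log^n x}-\sum_{n\ge1}\frac{\Phi_n'(x)}{\log^{n-1}x}$), invoking conditions (ii) and (iii) to show the correction series are $O(1/\log x)$ and $O(1)$ so that the dominant term $1/x$ forces $f'(x)>0$ for $x>0$ small. Your added remarks on realness, positivity, and $f(0^+)=0$ are harmless elaborations of what the paper leaves implicit.
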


\begin{proof} We have to show that $$f(x)=\frac{-2\pi}{a\log x}\left(1+\sum_{n=1}^{\infty}\frac{\Phi_n(x)}{\log^nx}\right),\;\;x>0,$$ has a positive derivative $f'(x)$. Indeed, $$f'(x)=\frac{2\pi}{a\log^2x}\left[\frac{1}{x}+\sum_{n=1}^{\infty}\frac{(n+1)\Phi_n(x)}{x\log^nx}- \sum_{n=1}^{\infty}\frac{\Phi'_n(x)}{\log^{n-1}x}\right].$$ According to conditions (ii) and (iii) of Definition 1, both series in the latter formula converge for $x>0$ small enough and $$\sum_{n=1}^{\infty}\frac{(n+1)\Phi_n(x)}{\log^nx}=O\left(\frac{1}{\log x}\right),\;\; \sum_{n=1}^{\infty}\frac{\Phi'_n(x)}{\log^{n-1}x}=O(1),\;\;x\to+0.$$ Hence, $$f'(x)=\frac{2\pi}{a\log^2x}\left[\frac{1}{x}\left(1+O\left(\frac{1}{\log x}\right)\right)+O(1)\right],\;\;x\to+0,$$ which implies that $f'(x)>0$ for $x>0$ small enough and completes the proof of Lemma 1.
\end{proof}

\begin{lemma} An admissible representation $f(x)$ is a one-to-one parametrization of a simple curve $\Gamma\subset\mathbb H$ for $x<0$ small enough.
\end{lemma}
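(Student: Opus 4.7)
The plan is to adapt the proof of Lemma~1 with two additions: verify that $\Gamma$ lies in the open upper half-plane, and deduce global injectivity from strict monotonicity of $u(x)=\re f(x)$.

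Fix the branch $\log x=\log|x|+i\pi$ for $x<0$, set $L:=\log|x|$, and write
$$\frac{-2\pi}{a\log x}=\frac{-2\pi(L-i\pi)}{a(L^{2}+\pi^{2})}.$$
By condition~(ii) of Definition~1 applied on the boundary $\im z=0$, one has $|\Phi_n(x)|\le M^{n}$ for $|x|$ small, so
$$S(x):=\sum_{n\ge1}\frac{\Phi_{n}(x)}{\log^{n}x}=O\bigl(1/|\log x|\bigr)\to0,\quad x\to0^{-}.$$
Writing $1+S(x)=A+iB$ with $A\to1$, $B\to0$ and using that the coefficients $c_{nk}$ are real, the imaginary part of $f(x)=-2\pi(L-i\pi)(A+iB)/(a(L^{2}+\pi^{2}))$ equals $2\pi(\pi A-LB)/(a(L^{2}+\pi^{2}))$. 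The dominant summand of $B$ comes from $\Phi_{1}(x)/\log x$ and gives $LB=O(1/|L|)$, so $\pi A-LB\to\pi>0$ and $v(x)>0$ on some $(-\epsilon,0)$, placing $\Gamma$ inside $\mathbb{H}$.

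Next, differentiating $f(x)$ for $x<0$ yields the same formal identity as in Lemma~1,
$$f'(x)=\frac{2\pi}{a\log^{2}x}\left[\frac{1}{x}+\sum_{n\ge1}\frac{(n+1)\Phi_{n}(x)}{x\log^{n}x}-\sum_{n\ge1}\frac{\Phi_{n}'(x)}{\log^{n-1}x}\right],$$
the bracket being $x^{-1}\bigl(1+O(1/|\log x|)\bigr)+O(1)$ by conditions~(ii) and~(iii). Since $\log^{2}x=L^{2}-\pi^{2}+2i\pi L=L^{2}\bigl(1+O(1/|L|)\bigr)$, one gets $f'(x)=2\pi\bigl(1+o(1)\bigr)/(axL^{2})$ as $x\to0^{-}$, and in particular $u'(x)=\re f'(x)\sim 2\pi/(axL^{2})<0$ for $|x|$ small. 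Hence $u$ is strictly decreasing on some $(-\epsilon,0)$, the map $x\mapsto f(x)=u(x)+iv(x)$ is one-to-one, and its continuous image is a simple curve.

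The main technical difficulty is the bookkeeping forced by the complex branch of $\log x$: although the formal expansions coincide with those of Lemma~1, one must carefully isolate the real and imaginary parts of both $f(x)$ and $f'(x)$ and verify that the dominant real-valued contributions have the correct signs (positive for $v$, negative for $u'$); this is exactly where $L=\log|x|\to-\infty$ is decisive.
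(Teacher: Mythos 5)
Your proposal is correct and follows essentially the same route as the paper: both arguments establish that $v(x)>0$ (via the dominant term $\pi$ in $\pi A-LB$, the remainder being $O(1/|\log|x||)$) and that $u'(x)<0$ (via the dominant term $\frac{2\pi}{ax\log^2|x|}$ with $x<0$), and then conclude injectivity from the strict monotonicity of $u$. The only cosmetic difference is that you extract $u'=\re f'$ from the complex derivative at the end rather than differentiating the real-part formula directly, which changes nothing of substance.
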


\begin{proof} It is sufficient to show that $$f(x)=\frac{-2\pi(\log|x|-i\pi)}{a(\log^2|x|+\pi^2)}\left[1+\sum_{n=1}^{\infty}\frac{\Phi_n(x)(\log|x|-i\pi)^n} {(\log^2|x|+\pi^2)^n}\right]=u(x)+iv(x)$$ satisfies the conditions $v(x)>0$ and $u'(x)<0$ for $x<0$ small enough.

The functions $\root n \of{|\Phi_n(x)|}$ are uniformly bounded in a neighborhood of $x=0$. Therefore $$v(x)=\frac{2\pi}{a(\log^2|x|+\pi^2)}\left[\pi- \frac{\Phi_1(x)\im(\log|x|-i\pi)^2}{\log^2|x|+\pi^2}-\right.$$ $$\left.\sum_{n=2}^{\infty}\frac{\Phi_n(x)\im(\log|x|-i\pi)^{n+1}} {(\log^2|x|+\pi^2)^n}\right]=$$ $$\frac{2\pi}{a(\log^2|x|+\pi^2)}\left[\pi+\frac{\Phi_1(x)2\pi\log|x|}{\log^2|x|+\pi^2}+ O\left(\frac{1}{\log|x|}\right)\right],\;\;x\to-0,$$ which implies that $v(x)>0$ in a left neighborhood of $x=0$.

Calculate $u'(x)$, $$u'(x)=\frac{2\pi}{ax}\left[\frac{\log^2|x|-\pi^2}{(\log^2|x|+\pi^2)^2}+\right.$$ $$\left.\sum_{n=1}^{\infty} \frac{(n+1)\Phi_n(x)\re((\log|x|-i\pi)^n(\log^2|x|-\pi^2-i2\pi\log|x|))}{(\log^2|x|+\pi^2)^{n+2}}\right]-$$ $$\frac{2\pi}{a}\sum_{n=1}^{\infty} \frac{\Phi'_n(x)\re(\log|x|-i\pi)^{n+1}}{(\log^2|x|+\pi^2)^{n+1}}=$$ $$\frac{2\pi}{a\log^2|x|}\left[\frac{1}{x}\left(1+O\left(\frac{1}{\log|x|}\right)\right)+O(1)\right],\;\;x\to-0,$$ which implies that $u'(x)<0$ in a left neighborhood of $x=0$ and completes the proof of Lemma 2.
\end{proof}

A curve $\Gamma$ defined by an admissible representation (\ref{rep}) has the first order tangency to $\mathbb R$ and curvature $a$ at the origin.

\section{Main results}

Now we are able to present the result on the asymptotic behavior of a conformal mapping in a neighborhood of a cusp with boundary arcs $(0,x_0)$ and $\Gamma$ given by an admissible representation (\ref{rep}).

\begin{theorem}
Let $D\subset\mathbb C$ be a simply connected domain having an analytic cusp at 0 with the boundary analytic arcs $[0,\epsilon]\subset\mathbb R$, $\epsilon>0$, and $\Gamma[-\epsilon,0]$ given by an admissible representation (\ref{rep}). Let $g:\mathbb H\to D$ be a conformal map, $g(0)=0$. Then
\begin{equation}
\lim_{z\to0}g(z)\left(-\frac{2\pi}{a\log|z|}\right)^{-1}=1,\;\;\im z>0. \label{th1}
\end{equation}
\end{theorem}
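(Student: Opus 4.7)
My strategy is to identify the Riemann map $g$, near $0$, with the explicit function
$$f(z)=-\frac{2\pi}{a\log z}\Bigl(1+\sum_{n=1}^{\infty}\frac{\Phi_n(z)}{\log^n z}\Bigr)$$
that already parametrizes the cusp boundary, modulo an analytic change of coordinate at $0$. Once this identification is made, the asymptotic (\ref{th1}) follows immediately from the leading term of $f$.

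First, I would verify that for $R>0$ small enough, $f$ is a conformal bijection from $H_R\setminus\{0\}$ onto a neighborhood $U$ of $0$ in $D$, extending continuously to $\bar H_R$ with $f(0)=0$, $f((-R,0))\subset\Gamma$ and $f((0,R))\subset(0,\epsilon)$. Lemmas 1 and 2 supply the required injectivity on the real diameter and the correct boundary correspondence. After also verifying injectivity on the semicircle $\{|z|=R\}\cap\overline{\mathbb H}$ (immediate for $R$ small from the admissibility estimates), an application of the argument principle on a contour avoiding the cusp point $0$ yields univalence on all of $H_R$.

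Next I would introduce $h:=g^{-1}\circ f$ on $H_R\setminus\{0\}$. Since $f(z)\to 0$ and $g^{-1}$ is continuous at $0$ by Carath\'eodory's theorem, $h$ is bounded near $0$, and hence extends analytically to $H_R$ with $h(0)=0$ by Riemann's removable singularity theorem. Because $f$ and $g$ both send the two real subintervals abutting $0$ to the same real boundary arcs of $D$ (namely $\Gamma$ for $x<0$ and $(0,\epsilon)$ for $x>0$), $h$ takes real values on $(-R,R)\setminus\{0\}$, and Schwarz reflection extends $h$ analytically to a full disk around $0$. The extension is a biholomorphism between two half-disk-like domains in $\mathbb H$, so $h'(0)\ne 0$; the orientation-preserving boundary correspondence then forces $h'(0)>0$, and hence $h^{-1}(w)=w/h'(0)+O(w^2)$ as $w\to 0$.

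Finally, writing $g(w)=f(h^{-1}(w))$ near $0$, I would use
$$\log h^{-1}(w)=\log|w|-\log h'(0)+i\arg w+O(|w|)=\log|w|\Bigl(1+O\bigl(1/\log|w|\bigr)\Bigr),\qquad w\to 0,$$
(the argument remains bounded while $\log|w|\to -\infty$) together with $f(z)=-2\pi/(a\log z)(1+O(1/\log z))$ from the admissibility conditions to conclude
$$g(w)=\frac{-2\pi}{a\log|w|}\bigl(1+o(1)\bigr),\qquad w\to 0,\ \mathrm{Im}\,w>0,$$
which is (\ref{th1}). The principal obstacle is the first step: lifting the one-dimensional boundary information of Lemmas 1 and 2 to a two-dimensional univalence statement for $f$ on $H_R$. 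The point $0$ is singular ($|f'|$ blows up there), so the standard argument-principle argument must use a contour that avoids $0$, combined with uniform control from the admissibility conditions (i)--(iv) guaranteeing that $f$ stays near the cusp boundary as $z\to 0$ in $\bar H_R$.
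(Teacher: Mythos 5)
Your proposal is correct and follows essentially the same route as the paper: establish univalence of the explicit admissible $f$ on a half-disk by showing its boundary image (real diameter via Lemmas 1--2, plus the semicircle) is a Jordan arc, then transfer the asymptotics from $f$ to $g$ through an analytic change of variable at $0$ obtained by Schwarz reflection. The only cosmetic difference is that you work directly with $h=g^{-1}\circ f$ where the paper factors $g=f\circ h\circ\Psi$ through an auxiliary automorphism $\Psi$ of the half-disk; the substance, including the invariance of the $-2\pi/(a\log|z|)$ asymptotics under such a change of coordinates, is identical.
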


\begin{proof}
According to Lemma 1, the admissible function $f(x)$ from (\ref{rep}) maps an interval $(0,\epsilon_0)$ one-to-one onto an interval $(0,R')\subset\mathbb R$. Similarly, this function maps an interval $(-\epsilon_0,0)$ one-to-one onto a simple curve $\Gamma$. Together with $(0,R')$, $\Gamma$ forms the analytic cusp at 0.

For $\epsilon\in(0,\epsilon_0)$, denote by $C_{\epsilon}$ the half-circle, $$C_{\epsilon}:=\{z:|z|=\epsilon,\im z\geq0\},$$ and by $\mathbb D^+_{\epsilon}$ the half-disk bounded by $[-\epsilon,\epsilon]$ and $C_{\epsilon}$. Suppose that $\epsilon$ is so small that $f(\mathbb D^+_{\epsilon})\subset D$. We wish to show that $f(z)$ is univalent in $\mathbb D^+_{\epsilon}$. To this purpose, it is sufficient to prove that $f(C_{\epsilon})$ is a simple curve $L$ connecting $f(\epsilon)\in(0,R')$ and $f(-\epsilon)\in\Gamma$.

Let $$f(\epsilon e^{i\varphi})=u_{\epsilon}(\varphi)+iv_{\epsilon}(\varphi),\;\;0\leq\varphi\leq\pi,$$ so that $$v_{\epsilon}(\varphi)=\frac{2\pi}{a}\left[\frac{\varphi}{\log^2\epsilon+\varphi^2}-\sum_{n=1}^{\infty}\frac{\im(\Phi_n(e^{i\varphi}) (\log\epsilon-i\varphi)^{n+1})}{(\log^2\epsilon+\varphi^2)^{n+1}}\right],\;\;0\leq\varphi\leq\pi.$$

As far as $\Gamma$ behaves like a parabola of curvature $a$ at 0, the curve $L$ is simple and meets $\Gamma$ only at $f(-\epsilon)$ provided $v'_{\epsilon}(\varphi)>0$ and $u'_{\epsilon}(\varphi)<0$. Note that $\root n\of{|\Phi_n(\epsilon e^{i\varphi})|}$ and $\root n\of{|\Phi'_n(\epsilon e^{i\varphi})|}$, $n\geq1$, are uniformly bounded on $[0,\pi]$ and calculate $v'_{\epsilon}(\varphi)$, $$v'_{\epsilon}(\varphi)=\frac{2\pi}{a}\left[\frac{\log^2\epsilon-\varphi^2}{(\log^2\epsilon+\varphi^2)^2}+\right.$$  $$\left.\sum_{n=1}^{\infty}\frac{(n+1)\im(\Phi_n(\epsilon e^{i\varphi})(\log\epsilon-i\varphi)^n(2\varphi\log\epsilon+i(\log^2\epsilon-\varphi^2)))}{(\log^2\epsilon+\varphi^2)^{n+2}}-\right.$$ $$\left.\sum_{n=1}^{\infty}\frac{\im(\Phi'_n(\epsilon e^{i\varphi})i\epsilon e^{i\varphi}(\log\epsilon-i\varphi)^{n+1})} {(\log^2\epsilon+\varphi^2)^{n+1}}\right].$$

Hence $$v'_{\epsilon}(\varphi)=\frac{2\pi}{a(\log^2\epsilon+\varphi^2)}\left[1+O\left(\frac{1}{\log\epsilon}\right)\right]>0,\;\;\epsilon\to0.$$

We have also $$u_{\epsilon}(\varphi)=\frac{-2\pi}{a}\left[\frac{\log\epsilon}{\log^2\epsilon+\varphi^2}+\sum_{n=1}^{\infty}\frac{\re(\Phi_n(e^{i\varphi}) (\log\epsilon-i\varphi)^{n+1})}{(\log^2\epsilon+\varphi^2)^{n+1}}\right],\;\;0\leq\varphi\leq\pi.$$

Calculate $u'_{\epsilon}(\varphi)$,
\begin{equation} u'_{\epsilon}(\varphi)=\frac{2\pi}{a(\log^2\epsilon+\varphi^2)}\left[\frac{2\varphi\log\epsilon} {\log^2\epsilon+\varphi^2}+\right. \label{der}
\end{equation}
$$\sum_{n=1}^{\infty}\frac{(n+1)\re(\Phi_n(\epsilon e^{i\varphi})(\log\epsilon-i\varphi)^n(2\varphi\log\epsilon+ i(\log^2\epsilon-\varphi^2)))}{(\log^2\epsilon+\varphi)^{n+1}}-$$ $$\left.\sum_{n=1}^{\infty}\frac{\re(\Phi'_n(\epsilon e^{i\varphi})\epsilon ie^{i\varphi}(\log\epsilon-i\varphi)^{n+1})} {(\log^2\epsilon+\varphi^2)^n}\right].$$

Find the asymptotical expansion in powers of $\varphi$ for every term in (\ref{der}) with the main coefficient depending on $\epsilon$, $$\frac{2\varphi\log\epsilon}{\log^2\epsilon+\varphi^2}=\frac{2\varphi}{\log\epsilon}\left(1+O\left(\frac{1}{\log\epsilon}\right)\right)+ O(\varphi^2),\;\;\varphi\to0,\;\;\epsilon\to0,$$ $$\sum_{n=1}^{\infty}\frac{(n+1)\re(\Phi_n(\epsilon e^{i\varphi})(\log\epsilon-i\varphi)^n(2\varphi\log\epsilon+ i(\log^2\epsilon-\varphi^2)))}{(\log^2\epsilon+\varphi)^{n+1}}=$$ $$\frac{\varphi}{\log\epsilon}O\left(\frac{1}{\log\epsilon}\right)+O(\varphi^2),\;\;\varphi\to0,\;\;\epsilon\to0,$$ $$\sum_{n=1}^{\infty}\frac{\re(\Phi'_n(\epsilon e^{i\varphi})\epsilon ie^{i\varphi}(\log\epsilon-i\varphi)^{n+1})} {(\log^2\epsilon+\varphi^2)^n}=\varphi\;O(\epsilon)+O(\varphi^2),\;\;\varphi\to0,\;\;\epsilon\to0.$$

Consequently, $$u'_{\epsilon}(\varphi)=\frac{2\pi}{\log^3\epsilon}2\varphi\left(1+O\left(\frac{1}{\log\epsilon}\right)\right)+O(\varphi^2)<0$$ for $0<\varphi\leq\varphi_0$ where $\varphi_0>0$ and $\epsilon>0$ are small enough.

Now let us examine $u'_{\epsilon}(\varphi)$ on $[\varphi_0,\pi]$. Take into account condition (iv) of Definition 1 and observe that (\ref{der}) implies the following inequalities  $$u'_{\epsilon}(\varphi)\leq\frac{2\pi}{a(\log^2\epsilon+\varphi^2)}\left[\frac{2\varphi_0\log\epsilon} {\log^2\epsilon+\varphi^2}+\right.$$ $$\max_{\varphi_0\leq\varphi\leq\pi}\sum_{n=1}^{\infty}\frac{(n+1)\re(\Phi_n(\epsilon e^{i\varphi})(\log\epsilon-i\varphi)^n(2\varphi\log\epsilon+ i(\log^2\epsilon-\varphi^2)))}{(\log^2\epsilon+\varphi)^{n+1}}+$$ $$\left.\max_{\varphi_0\leq\varphi\leq\pi}\sum_{n=1}^{\infty}\frac{\im(\Phi'_n(\epsilon e^{i\varphi})\epsilon e^{i\varphi}(\log\epsilon-i\varphi)^{n+1})} {(\log^2\epsilon+\varphi^2)^n}\right]=$$
$$\frac{2\pi}{a(\log^2\epsilon+\varphi^2)^2}[2\varphi_0\log\epsilon+o(\log\epsilon)+O(\epsilon\log^2\epsilon)]<0$$ for $\epsilon>0$ small enough.

Thus $f(z)$ is locally univalent at 0, $f$ maps the half-disk $\mathbb D_{\epsilon}^+$ onto $f(\mathbb D_{\epsilon^+})$ and is continuous on $\mathbb D_{\epsilon}^+\cup[-\epsilon,\epsilon]$. By (\ref{rep}), $f(z)$ possesses the asymptotical property at 0,
\begin{equation}
\lim_{z\to0}f(z)\left(-\frac{2\pi}{a\log|z|}\right)^{-1}=1,\;\;\im z>0, \label{loc}
\end{equation}
which is invariant under an analytic change of variables $z=z(\zeta)$ preserving 0.

Let $g$, $g(0)=0$, be a conformal map from $\mathbb H$ onto a simply connected domain $D$ having an analytic cusp at 0 with the boundary analytic arcs $[0,\epsilon]\subset\mathbb R$, $\epsilon>0$, and $\Gamma[-\epsilon,0]$ given by an admissible representation (\ref{rep}). Denote by $h:\mathbb D_{\epsilon}^+\to f^{-1}(g(\mathbb D_{\epsilon}^+))$ a conformal map preserving 0. Then $h^{-1}\circ f^{-1}\circ g$ is an automorphism $\Psi$ of $\mathbb D_{\epsilon}^+$ preserving 0. Hence $$g=f\circ h\circ\Psi.$$ The functions $\Psi$ and $h$ are analytic at 0 because both of them can be analytically extended through an interval on $\mathbb R$ containing 0. Therefore, it follows from (\ref{loc}) that $g$ and $f$ possess the same asymptotical property (\ref{th1}) at 0 which completes the proof of Theorem 1.
\end{proof}

Kaiser \cite{Kai} proved the asymptotic relation (\ref{kai}) for natural $d\in\mathbb N$. Theorem 1 gives a partial Kaiser's relation (\ref{kai}) for $d=1$ and $a$ substituted by $2a$ with another parametrization of the analytic cusp boundaries. However it is possible to generalize Theorem 1 and Kaiser's results to all $d>0$.

\begin{theorem}
For any $d>0$, let $D^{\frac{1}{d}}\subset\mathbb C$ be a simply connected domain having an analytic cusp at 0 with the boundary analytic arcs $[0,\epsilon]\subset\mathbb R$, $\epsilon>0$, and $\Gamma^{\frac{1}{d}}[-\epsilon,0]$ where $\Gamma$ is given by an admissible representation (\ref{rep}). Let $g:\mathbb H\to D^{\frac{1}{d}}$ be a conformal map, $g(0)=0$. Then
\begin{equation}
\lim_{z\to0}g(z)\left(-\frac{2\pi}{da\log|z|}\right)^{-\frac{1}{d}}=1,\;\;\im z>0. \label{th2}
\end{equation}
The branch of $w^{\frac{1}{d}}$ here and in the sequel is taken so that $1^{\frac{1}{d}}=1$.
\end{theorem}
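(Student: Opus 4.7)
The approach is to reduce Theorem 2 to Theorem 1 via the power map $w\mapsto w^d$, which inverts the passage from $\Gamma$ to $\Gamma^{1/d}$. Setting
\[
G(z) := g(z)^d
\]
with the principal branch (consistent with $1^{1/d}=1$), I would aim to show that $G$ realizes, locally near $0$, a conformal mapping from $\mathbb H$ onto the Theorem~1 cusp domain bounded by $[0,\epsilon^d]\subset\mathbb R$ and by $(\Gamma^{1/d})^d=\Gamma$.

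The first step is local univalence. Since $\Gamma^{1/d}$ has $d$-th order tangency to $\mathbb R$ at $0$, the cusp $D^{1/d}$ lies inside an arbitrarily narrow sector about $\mathbb R^+$ near the cusp tip, so $w\mapsto w^d$ is univalent on $g(\mathbb D^+_\delta)$ for $\delta$ small. Hence $G$ is a locally conformal map from $\mathbb D^+_\delta$ into a neighborhood of $0$ in the Theorem~1 cusp, extending continuously to the real boundary arcs. Now Theorem~1 applies to $G$: although it is stated for a global conformal map, its proof actually establishes the asymptotic (\ref{loc}) first for the admissible model $f$ and then transfers it to any other map $\tilde g=f\circ h\circ\Psi$ via Schwarz-reflection analyticity of $h$ and $\Psi$ at $0$. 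The same transfer applies to the locally defined $G$, yielding
\[
\lim_{z\to 0}\, G(z)\left(-\frac{2\pi}{a\log|z|}\right)^{-1}=1,\qquad \im z>0.
\]

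Taking the $d$-th root with the prescribed branch then gives (\ref{th2}), after tracking the constant through the $d$-th power carefully. I expect the main technical obstacle to be the univalence check in the first step, particularly when $d<1$: then the injectivity sector $|\arg w|<\pi/d$ of $w\mapsto w^d$ exceeds the upper half-plane, so one must exploit the asymptotic narrowness of the cusp (quantified by the tangency estimate $V\sim \frac{a}{2d}U^{d+1}$ for $\Gamma^{1/d}$) rather than any coarse global estimate. A secondary point is the careful comparison between the admissible parameter $a$ entering the representation of $\Gamma$ and the effective leading coefficient felt by $\Gamma^{1/d}$, which is what produces the factor $d$ in the denominator in (\ref{th2}); this bookkeeping is also what makes the formula consistent with Kaiser's (\ref{kai}) specialised, via $\Gamma^{1/d}$, to arbitrary real $d>0$.
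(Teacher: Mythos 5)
Your reduction runs in the opposite direction from the paper's. You post-compose with the power map in the image: $G=g^d$ carries $D^{1/d}$ back onto the Theorem~1 domain $D$ while leaving the source $\mathbb H$ untouched, and your univalence argument for $w\mapsto w^d$ on the asymptotically narrow cusp works for every $d>0$. The paper instead pre-composes in the source: it takes the model map $f:\mathbb H\to D$ from Theorem~1 and sets $g(z)=f^{1/d}(z^d)$, so that the factor $d$ in (\ref{th2}) arises from $\log z^d=d\log z$.

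The genuine problem is your last step. Since $G=g^d$ is (locally) a conformal map of $\mathbb H$ onto $D$ sending $0$ to the cusp, the transfer argument of Theorem~1 gives $G(z)\sim -2\pi/(a\log|z|)$, hence $g(z)\sim\left(-2\pi/(a\log|z|)\right)^{1/d}$ --- with no $d$ in the denominator. No amount of ``tracking the constant through the $d$-th power carefully'' will produce the missing factor: the two candidate limits differ by the fixed factor $d^{1/d}\neq 1$ for $d\neq 1$, and the only freedom left after fixing $G(0)=0$ is an automorphism of $\mathbb H$ analytic at $0$, under which $\log|z|$ is asymptotically invariant. So your method, carried to completion, proves the statement with $a$ in place of $da$, not (\ref{th2}) as written. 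Be aware that your constant is the one consistent with Kaiser's relation (\ref{kai}) applied to $\Gamma^{1/d}$, for which $\sphericalangle_D(t)\sim\frac{a}{2d}\,t^d$ and hence $d\,a_{\text{Kaiser}}=\frac{a}{2}$, whereas the paper's substitution $z\mapsto z^d$ does not preserve $\mathbb H$ when $d\neq 1$; the discrepancy you would uncover by finishing your computation honestly is a substantive issue to flag, not a bookkeeping detail to absorb.
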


\begin{proof}
As in Theorem 1, reduce the proof to a domain $D$ in a neighborhood of 0. For a given $d>0$, choose a domain $D$ in a neighborhood of the cusp at 0 such that the domains $D$ and $D^{\frac{1}{d}}$ are domains in $\mathbb C$ with the cusp at 0 with the boundary arcs $[0,\epsilon]\subset\mathbb R$, $\epsilon>0$, and $\Gamma[-\epsilon,0]$ and $\Gamma^{\frac{1}{d}}[-\epsilon,0]$, respectively, where $\Gamma$ is given by an admissible representation (\ref{rep}).

Apply Theorem 1 and find a function $f:\mathbb H\to D$ for which relation (\ref{loc}) is valid. Then the function $$g(z)=f^{\frac{1}{d}}(z^d)$$ satisfies the conditions of Theorem 2. The relation $$\lim_{z\to0}f^{\frac{1}{d}}(z^d)\left(-\frac{2\pi}{a\log z^d}\right)^{-\frac{1}{d}}=1$$ implies (\ref{th2}) and completes the proof of Theorem 2.
\end{proof}

Note that the real part $u(z)$ and the imaginary part $v(z)$ of the admissible function $f(z)=u(z)+iv(z)$ in (\ref{rep}) have the asymptotic expansion $$v(z)=\frac{a}{2}u^2(z)+O(u^3(z)),\;\;u\to0.$$ In Theorem 2 we deal with $g(z)=f^{\frac{1}{d}}(z^d)=u^*(z)+iv^*(z)$ where $u^*(z)$ and $v^*(z)$ have the asymptotic expansion $$v^*(z)=\frac{a}{2d}(u^*(z))^{1+d}+o(u^*(z)^{1+d}),\;\;u^*\to0.$$ This means that the curve $\Gamma^{\frac{1}{d}}$ in Theorem 2 has the $d$-th order tangency to $\mathbb R$ at 0. The coefficient $\frac{a}{d}$ does not characterize the curvature of $\Gamma^{\frac{1}{d}}$ at 0 if $d\neq1$ but it can be useful for describing geometrical properties of $\Gamma^{\frac{1}{d}}$ at 0.


\begin{thebibliography}{8}

\bibitem{Kai}
Kaiser, T.: Asymptotic behaviour of the mapping function at an analytic cusp with small perurbation of angles, Comput. Methods Funct. Theory, 10, 35--47 (2010).

\bibitem{Leh}
Lehman, R.S.: Development of the mapping function at an analytic corner, Pacific J. Math., 7, 1437--1449 (1957).

\bibitem{Pom}
Pommerenke, Ch.: Boundary Behaviour of Conformal Maps, Springer-Verlag, Berlin, Heidelberg (1992).

\bibitem{Pro}
Prokhorov D., Vasil'ev, A.: Singular and tangent slit solutions to the L\"owner equation. In: B.~Gustaffson and A.~Vasil'ev (eds.) Analysis and Mathematical Physics, pp. 455--463. Birkhauser, Berlin (2009).

\bibitem{War}
Warschawski, S.E.: \"Uber das Randverhalten der Ableitung der Abbildungsfunktion bei konformer Abbildung, Math. Z., 35, 321--456 (1932).

\bibitem{War1}
Warschawski, S.E.: On conformal mapping of infinite strips, Trans. Amer. Math. Soc., 51, 280--335 (1942).

\end{thebibliography}
\end{document}